\newcommand{\bN}{\mathbb{N}}
\newcommand{\bT}{\mathbb{T}}
\newcommand{\bZ}{\mathbb{Z}}
\newtheorem{thm}{Theorem}[section]
\newtheorem{lem}[thm]{Lemma}
\newtheorem{cor}[thm]{Corollary}
\newtheorem{claim}[thm]{Claim}
\theoremstyle{definition}
\newtheorem{qu}{Question}
\def\Pre{\mathrm{Pre}}
\def\IW{\mathrm{IW}}
\begin{document}

\title{Smallest percolating sets \\ in bootstrap percolation on grids}
\author{Micha{\l} Przykucki\thanks{School of Mathematics, University of Birmingham, Edgbaston, Birmingham B15 2TT, United Kingdom. \newline  E-mail: \texttt{m.j.przykucki@bham.ac.uk, tomshelton148@aol.com.}} \thanks{Supported by the EPSRC grant EP/P026729/1.} \and Thomas Shelton\footnotemark[1]}
\maketitle

\begin{abstract}
In this paper we fill in a fundamental gap in the extremal bootstrap percolation literature, by providing the first proof of the fact that for all $d \geq 1$, the size of the smallest percolating sets in $d$-neighbour bootstrap percolation on $[n]^d$, the $d$-dimensional grid of size $n$, is $n^{d-1}$. Additionally, we prove that such sets percolate in time at most $c_d n^2$, for some constant $c_d >0 $ depending on $d$ only.
\end{abstract}

\section{Introduction}
\label{sec:intro}

\emph{Bootstrap percolation}, suggested by Chalupa, Leath, and Reich~\cite{bootstrapbethe}, is a simple cellular automaton modelling the spread of an infection on the vertex set of a graph $G$. For some positive integer $r$, given a set of initially infected vertices $A \subseteq V(G)$, in consecutive rounds we infect all vertices with at least $r$ already infected neighbours. \emph{Percolation} occurs if every vertex of $G$ is eventually infected.

The majority of research into bootstrap percolation processes has been focused on the probabilistic properties of the model. More precisely, if we initially infect every vertex independently at random with some probability $p$, how likely is the system to percolate? The monotonicity of the model (i.e., the fact that infected vertices never heal) makes it reasonable to ask about the value of the \emph{critical probability}~$p$, above which percolation becomes more likely to occur than not. This quantity has been analysed for many different families of graphs $G$ and for various infection rules, and often very sharp results have been obtained by, e.g., Aizenman and Lebowitz~\cite{metastabilityeffects}, Holroyd~\cite{sharpmetastability}, and Balogh, Bollob{\'a}s, Duminil-Copin, and Morris~\cite{sharpbootstrapall}.

Another family of questions related to bootstrap percolation that have been studied is concerned with the extremal properties of the model. Morris~\cite{largestgridbootstrap} analysed the size of the largest minimal percolating sets in $2$-neighbour bootstrap percolation on the $n \times n$ square. For the same setup, Benevides and Przykucki~\cite{maxtime} determined the maximum time the process can take until it stabilises. However, the first extremal question that attracted attention in bootstrap percolation was about the size of the smallest percolating sets. For grid graphs, this has been studied by Pete~\cite{diseaseprocesses} (the summary of Pete's results can be found in Balogh and Pete~\cite{randomdisease}). For the hypercube, the size of the smallest percolating sets for all values of the infection threshold was found by Morrison and Noel~\cite{extremalcube}. Feige, Krivelevich, and Reichman \cite{contagiousGnp} analysed the size of these sets in random graphs, while Coja-Oghlan, Feige, Krivelevich, and Reichman \cite{contagiousExpanders} studied such sets in expander graphs.

\subsection{The $d$-neighbour process in $d$ dimensions}

Let us introduce some notation. For $n \in \bN$, let $[n] = \{1,2, \ldots, n\}$. The $d$-dimensional grid graph of size $n$ is the graph with vertex set $[n]^d$, in which $u,v \in [n]^d$ are adjacent if and only if they differ by a value of 1 in exactly one coordinate. For $d,r,n \in \bN$, let $G_{d,r}(n)$ denote the size of the smallest percolating sets in $r$-neighbour bootstrap percolation on $[n]^d$. For a set $A \subset [n]^d$, let $\langle A \rangle_r$ be the \emph{closure} of $A$ in $r$-neighbour bootstrap percolation, i.e., the set of all vertices that become infected in the process that was started from $A$.

Among the results stated in~\cite{diseaseprocesses} (see also the Perimeter Lemma in the Appendix to~\cite{randomdisease}) is the following theorem.
\begin{thm}
\label{thm:pete}
For all $n, d \in \bN$, we have $G_{d,d}(n)=n^{d-1}$.
\end{thm}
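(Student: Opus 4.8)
The plan is to prove the two inequalities $G_{d,d}(n)\le n^{d-1}$ and $G_{d,d}(n)\ge n^{d-1}$ separately. The lower bound is a short \emph{perimeter} argument; the upper bound needs an explicit construction and is where the real work lies.

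For the lower bound I would argue as follows. For a set $S\subseteq[n]^d$ let $\phi(S)$ be the number of ordered pairs $(v,w)$ with $v\in S$, $w\in\bZ^d$, $\|v-w\|_1=1$, and $w\notin S$; equivalently, $\phi(S)$ is the $(d-1)$-dimensional surface area of $\bigcup_{v\in S}Q_v$, where $Q_v$ is the closed unit cube centred at $v$. Three facts drive the argument: (i) $\phi(S)\le 2d\,|S|$, since each $v\in S$ contributes at most its $2d$ lattice-neighbours; (ii) $\phi([n]^d)=2d\,n^{d-1}$, the surface area of the box; and (iii) if $v\notin S$ has $k\ge d$ neighbours in $S$, then $\phi(S\cup\{v\})=\phi(S)+2d-2k\le\phi(S)$ (adding $Q_v$ removes $k$ exposed faces and creates $2d-k$ new ones). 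Hence $\phi$ is non-increasing along the $d$-neighbour process, so if $A$ percolates then $2d\,n^{d-1}=\phi([n]^d)=\phi(\langle A\rangle_d)\le\phi(A)\le 2d\,|A|$, which gives $|A|\ge n^{d-1}$.

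For the upper bound I would build a percolating set of size exactly $n^{d-1}$ by induction on $d$. The base case $d=1$ is trivial, since $\{1\}\subseteq[n]$ percolates under $1$-neighbour bootstrap on the path. It is instructive to see $d=2$ first: the diagonal $A=\{(i,i):i\in[n]\}$ percolates because once $[k]^2\subseteq\langle A\rangle_2$, the seed vertex $(k+1,k+1)$ together with the last row and column of $[k]^2$ lets one infect $(k,k+1)$, then $(k-1,k+1),\dots,(1,k+1)$ in turn, and symmetrically the last column, yielding $[k+1]^2$; induction on $k$ finishes it. For general $d$, write $[n]^d=[n]^{d-1}\times[n]$ with slices $L_1,\dots,L_n$ along the last coordinate, fix a percolating set $P_{d-1}$ for the $(d-1)$-neighbour process on $[n]^{d-1}$ with $|P_{d-1}|=n^{d-2}$, and place in each slice a copy of $P_{d-1}$, the $n$ copies being mutually offset so that the union has size $n\cdot n^{d-2}=n^{d-1}$ and so that the seed in $L_j$ interlocks with the seed in $L_{j+1}$, in the same sense that $(1,1)$ interlocks with $(2,2)$ above. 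One then shows the process sweeps through the slices: interlocking of $L_1$ and $L_2$ forces infection to spread inside $L_1$ (its vertices gain a free infected neighbour from $L_2$ and thereafter behave like the $(d-1)$-neighbour process within $L_1$), and as $L_1$ fills it feeds $L_2$, which feeds $L_3$, and so on.

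The main obstacle is precisely this cascade, together with exhibiting a valid offset pattern. The naive idea of filling the slices one at a time breaks down at $L_1$: a vertex of $L_1$ has only one neighbour outside its slice, so without help it would need $d$ infected in-slice neighbours, i.e.\ a $d$-neighbour percolating set of $[n]^{d-1}$, which is too large to afford. Letting consecutive slices fill \emph{together} via interlocking seeds avoids this, but since each copy of $P_{d-1}$ must still reach across all of $[n]^{d-1}$, choosing offsets that keep every copy inside the box while preserving the interlocking (and, one should check, independence of the whole seed) is delicate, and the resulting cascade runs the $(d-1)$-dimensional hypothesis on many fibres simultaneously and must be controlled near the boundary of $[n]^d$, where vertices have reduced degree. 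Making this bookkeeping work uniformly in $n$ and $d$ — and, for the time statement, within $O(n^2)$ steps — is the heart of the matter, and is presumably why a complete proof has been missing from the literature.
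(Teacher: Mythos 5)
Your lower bound is correct and is essentially the paper's perimeter argument: the surface area $\phi$ is non-increasing (after serialising simultaneous infections within a round, which is routine), starts at most $2d|A|$ and must end at $2dn^{d-1}$. No issue there.

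The upper bound, however, contains a genuine gap, and it sits exactly where you concede "the heart of the matter" lies. First, you never exhibit the interlocking offsets: you need $n$ copies of a $(d-1)$-dimensional seed placed inside $[n]^{d-1}$, and because the perimeter bound is tight there is no slack whatsoever --- with $|A|=n^{d-1}$ the perimeter must stay constant throughout, so the seed must be an independent set, every later infection must occur with \emph{exactly} $d$ infected neighbours, and no two adjacent vertices may become infected in the same step; a single vertex anywhere receiving "double help" destroys percolation by your own lower-bound computation. So choosing the offsets is not delicate bookkeeping to be supplied later; it is the entire construction. Second, the cascade claim is circular as stated: for a vertex of $L_1$ to use its "free infected neighbour" in $L_2$, that neighbour must already be infected, but infection in $L_2$ relies in turn on help from $L_1$ or $L_3$, so the slice processes are mutually dependent and do not decouple into independent $(d-1)$-neighbour processes with a helper --- and the boundary slices $L_1,L_n$ have only one neighbouring slice, which is precisely the obstruction you note for the naive sweep. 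The paper faces the same difficulty with its diagonal-hyperplane set $A=\bigcup_{i}V_{in}$ (which one can view as offset diagonal seeds in your slices) and resolves it not by any slice-by-slice or level-by-level description --- it explicitly observes that no such clean sweep exists, the two growths "barely meet", and even $[6]^3$ takes $14$ steps --- but by assigning to each vertex $v$ a specific $d$-element set $\mathrm{Pre}(v)$ of neighbours determined by its level $t_v$, building the infection witness tree, and proving via the potential function $L_C$ that no label can repeat along a directed path, so the tree is finite and $v$ is infected. Until you specify your offsets and supply an analogous well-foundedness argument for the mutually dependent growth in all fibres (together with the boundary cases), the inequality $G_{d,d}(n)\leq n^{d-1}$, and hence the theorem, is not proved; the same missing ingredient also blocks any $O(n^2)$ bound on the percolation time.
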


This is obviously trivial for $d=1$, and the case when $d=2$ constitutes a lovely and well-known puzzle. Indeed, finding a percolating set of size $n$ is easy: just take one of the diagonals of the square. To show that there is no percolating set of size strictly less than $n$, we can refer to the famous \emph{perimeter argument}: the perimeter of the infected set (understood as the number of edges between an infected and a healthy vertex, if we naturally embed our square $[n]^2$ in the infinite grid $\bZ^2$) can never grow. Indeed, whenever a new vertex becomes infected, it is by virtue of at least two perimeter edges. Thus at least two edges are removed from the perimeter of the infected set, and at most two new ones are added, and the aforementioned monotonicity of the perimeter follows. Since the whole $n \times n$ grid has perimeter $4n$, and any initially infected vertex contributes at most $4$ edges to the perimeter, we need at least $n$ initially infected vertices to percolate.

Somewhat surprisingly, the perimeter argument carries immediately to higher dimensions, giving us the appropriate lower bound $G_{d,d}(n) \geq n^{d-1}$ for all $d \in \bN$. As for the upper bound, there is a natural candidate, sometimes referred to as a ``cyclic combination'' of the one-dimensional lower set. More precisely, for $d\leq k\leq dn$, let $V_k=\{v=(v_1,...,v_d)\in [n]^d:\sum_{i=1}^{d} v_i=k\}$. It is then natural to believe that the set
\begin{equation}
\label{eqn:initialSet}
A = A_d = \bigcup_{i=1}^{d} V_{in}
\end{equation}
percolates in $d$-neighbour bootstrap percolation on $[n]^d$, and indeed this is the construction that was used to deduce the upper bound in~\cite{diseaseprocesses}. One can imagine how two ``neighbouring hyperplanes'', $V_{(i-1)n}$ and $V_{in}$, fill in the space between them with infection until the two growths meet, from which point on the process quickly finishes. The fact that $G_{d,d}(n) = n^{d-1}$ has become a ``folklore knowledge'' in the area of bootstrap percolation, and has sometimes even been referred to as an ``observation''. Up to our best knowledge~\cite{PetePrivate}, no formal proof of Theorem~\ref{thm:pete} was provided in~\cite{diseaseprocesses}, and no such proof exists in the literature.

However, problems arise quickly when one tries to describe how exactly the space between the two hyperplanes is filled in. Any vertex in $V_{(i-1)n+1}$ with at least one coordinate equal to $1$ has fewer than $d$ infected neighbours in $V_{(i-1)n}$, and consequently does not become infected in step~1. Similarily, after one step, any vertex in $V_{(i-1)n+2}$ with at least one coordinate equal to at most $2$ has fewer than $d$ infected neighbours in $V_{(i-1)n+1}$, and also remains healthy. This problem builds up (analogous constraints can be easily formulated for the layers being infected ``from above'' by $V_{in}$) and, in fact, the two growths barely meet - two hyperplanes at distance $n+1$ apart would have stayed separated, while hyperplanes at distance $n-1$ would result in some vertices being infected by more than $d$ infected neighbours, and consequently no percolation by the perimeter argument.

What is however even more troublesome, describing the growth from the moment of the meeting onwards is where the real challenges occur. By the perimeter argument, we know that we have no elbow room in this description: no proper subset of $A$ percolates, and even a small perturbation of $A$ would not percolate if any vertex ever became infected by virtue of more than $d$ infected neighbours. In Figure~\ref{fig:G_{3,3}(6)} we present the growth of the infected set, starting from $A$ as defined in~\eqref{eqn:initialSet}, in $3$-neighbour bootstrap percolation on $[6]^3$. Even though we are in just three dimensions, and the size of the grid is very small, the process already feels quite difficult to describe and lasts as many as 14 steps. Consequently, we believe that Theorem~\ref{thm:pete} requires a proper, formal proof, which we provide as the main result of this paper in Section~\ref{sec:main}.

\begin{figure}[htb]
    \centering
    \includegraphics[scale=0.41]{./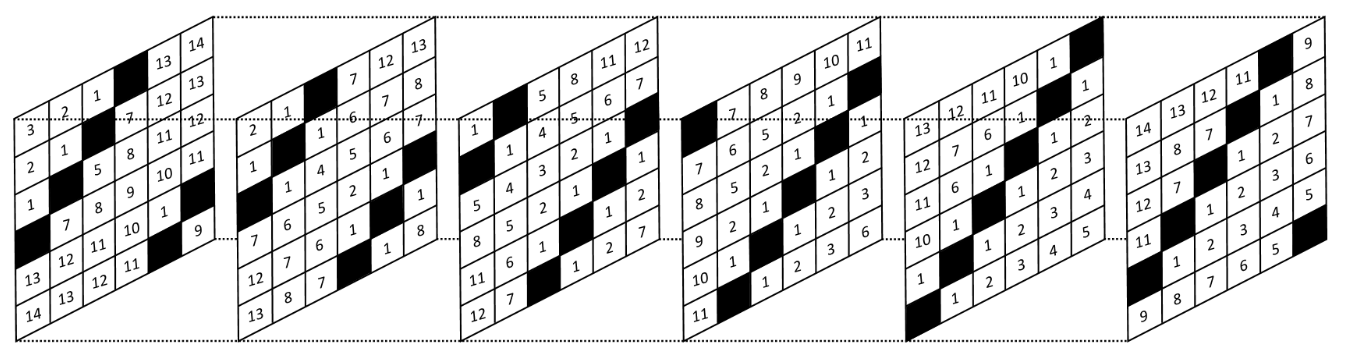}
    \caption{Example showing the spread of infection in $[6]^3$ starting from a set of size $G_{3,3}(6) = 6^2 = 36$.}
    \label{fig:G_{3,3}(6)}
\end{figure}

Another reason to convince oneself about the fact that the process of filling in the space between $V_{(i-1)n}$ and $V_{in}$ is nontrivial becomes apparent when we look at the results of numerical simulations, and analyse the time the process takes to terminate. It quickly becomes apparent that, for a fixed $d$, this time grows quadratically with $n$. This should be somewhat surprising, as by averaging there is some $i$ such that the volume between $V_{(i-1)n}$ and $V_{in}$ is of the order $n^d$. For a percolating set $A$, let $T(A)$ be the time (i.e., the number of time steps) the process takes to infect the whole vertex set. Let
\begin{equation}
\label{eqn:smallestTimeDefn}
m_d(n) = \min \left \{T(A) : \langle A \rangle_d = [n]^d, |A| = n^{d-1} \right \}.
\end{equation}
In Section~\ref{sec:time}, we come back to the question of percolation time and we prove the following theorem.
\begin{thm}
\label{thm:smallestTime}
 We have $m_1(n) = \lceil n/2 \rceil$, $m_2(n)  = n-1$, and for $d \geq 3$,
 \[
  \frac{dn}{2}+O(1) \leq m_d(n) \leq (d+2)n^2+n.
 \]
\end{thm}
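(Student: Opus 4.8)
\emph{The plan, case by case.} I would treat the three regimes separately, with the $d\ge 3$ lower bound as the genuine obstacle. For $d=1$ the smallest percolating set is a single vertex $\{k\}$, the infection spreads at unit speed in each direction, so $T(\{k\})=\max\{k-1,n-k\}$, and minimising over $k\in[n]$ gives the stated value. For the $d=2$ upper bound I would take $A$ to be the main diagonal $\{(i,i):i\in[n]\}$ and prove by a one-line induction that after $t$ rounds exactly the vertices $(a,b)$ with $|a-b|\le t$ are infected, whence $T(A)=n-1$ and $m_2(n)\le n-1$.

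\emph{Lower bounds via the perimeter identity.} For both $d=2$ and $d\ge3$ the starting point is the perimeter (surface area) argument from the introduction: embedding $[n]^d$ in $\bZ^d$, the number of infected--healthy edges is non-increasing, and for a percolating set of size $n^{d-1}$ it must equal $2dn^{d-1}$ at \emph{every} step. Two consequences follow: $A$ is an independent set, and every newly infected vertex has \emph{exactly} $d$ infected neighbours at the moment it flips. I would then combine this rigidity with the fact that infection expands by at most $1$ in $\ell_1$-distance per round (equivalently, for any $\pm1$-linear functional $\ell$ the range of $\ell$ over the infected set grows by at most $2$ per round, so $T\ge\tfrac12(d(n-1)-\mathrm{range}_\ell(A))$). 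Neither ingredient suffices alone --- both can be as weak as $\Theta(1)$ for, say, a diagonally spread-out $A$ --- so the crux is to show that whenever these bounds are weak, $A$ must be so thinly spread across the diagonal layers $V_k$ that filling the interiors of those layers, together with clearing the boundary obstructions of the introduction (vertices with a coordinate equal to $1$ or $n$ are infected late), already forces $\Omega(n)$ rounds; balancing the two mechanisms should give $m_d(n)\ge dn/2+O(1)$, with slightly finer bookkeeping recovering $m_2(n)=n-1$ exactly. \textbf{This is the step I expect to be hardest: turning the qualitative lower bound $m_d(n)=\Omega(n)$ into the sharp constant $d/2$ is delicate precisely because the perimeter identity leaves no slack, so one must argue quite carefully about how the ``exactly $d$ neighbours'' constraint prevents the infection from advancing efficiently in all $d$ directions at once.}

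\emph{The upper bound for $d\ge3$.} Here I would analyse the canonical set $A=A_d=\bigcup_{i=1}^{d}V_{in}$ of \eqref{eqn:initialSet}, whose percolation is established in Section~\ref{sec:main}, and decompose $[n]^d$ into the ``cap'' $\{v:\sum_i v_i\le n\}$ below $V_n$ together with the $d-1$ slabs $\{(i-1)n\le\sum_i v_i\le in\}$ for $2\le i\le d$, whose bounding layers $V_n,\dots,V_{dn}$ are all infected at round $0$. The cap fills greedily downward from $V_n$: each vertex of $V_{n-j}$ has all $d$ of its ``upward'' neighbours in the already-infected layer $V_{n-j+1}$ (no coordinate can equal $n$ there), so the cap is entirely infected within $n$ rounds. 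For each slab I would follow the descending front from $V_{in}$ and the ascending front from $V_{(i-1)n}$; each front advances one full diagonal per round except where it stalls at the grid boundary, and since the two fronts ``barely meet'' each such stall is cleared only over $O(n)$ further rounds, but there are only $O(n)$ of them, so each slab is filled within $O(n^2)$ rounds. Handling the cap and the $d-1$ slabs in turn and tracking the constants then yields $T(A_d)\le(d+2)n^2+n$.

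\emph{Remark on the shape of the result.} The gap between $dn/2+O(1)$ and $(d+2)n^2+n$ for $d\ge3$ is genuine: simulations suggest the truth is $\Theta(n^2)$, but the stalling behaviour inside a single slab --- the heart of the difficulty already visible in the $[6]^3$ example --- seems hard to pin down precisely, so I would be content with the quadratic upper bound obtained from the crude ``$O(n)$ stalls, each cleared in $O(n)$ rounds'' count, and with the linear lower bound coming from the slow-front estimate sharpened by independence.
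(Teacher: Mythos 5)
Your $d=1$ case and the $d=2$ upper bound match the paper and are fine, but both of the substantive parts of the theorem are left with genuine gaps. For the lower bounds ($m_2(n)\ge n-1$ and $m_d(n)\ge dn/2+O(1)$) your proposed route --- the $\ell_1$-speed estimate $T\ge\tfrac12\bigl(d(n-1)-\mathrm{range}_\ell(A)\bigr)$ balanced against some unproven structural claim about how a ``diagonally spread-out'' $A$ must stall --- is exactly the step you flag as hard, and you do not supply the argument that closes it. The paper's proof uses a different and much more local consequence of perimeter rigidity: since every new infection consumes exactly $d$ perimeter edges, and since two \emph{adjacent} vertices becoming infected in the same round would strictly decrease the perimeter, every newly infected vertex other than a corner of the grid still has a healthy neighbour at the moment it flips. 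Starting from one of the two central vertices (one of which must be initially healthy, again by the perimeter count), one builds a path of adjacent vertices with strictly increasing infection times that can only terminate at a corner; its length, at least the distance from the centre to a corner, gives $n-1$ for $d=2$ and $dn/2+O(1)$ for $d\ge 3$. Nothing in your sketch produces such a chain, and without it the sharp constant $d/2$ (let alone the exact value $n-1$ for $d=2$) is not established.

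For the upper bound when $d\ge 3$, your ``two fronts advance one diagonal per round, stall only at the boundary, $O(n)$ stalls each cleared in $O(n)$ rounds'' picture is precisely the heuristic the paper warns cannot be turned into a proof easily: the obstructions near the boundary compound from layer to layer, the fronts barely meet, and the dynamics after they meet (already $14$ rounds in $[6]^3$) is what makes even the percolation statement nontrivial. You give no mechanism bounding the number of stalls by $O(n)$ or the clearing time of each by $O(n)$, and ``tracking the constants'' to reach $(d+2)n^2+n$ is asserted rather than derived. The paper avoids describing the dynamics altogether: it reuses the infection witness tree $\IW(v)$ from Section~\ref{sec:main} and proves (Lemma~\ref{lem:quadraticHeight}) that every directed path in $\IW(v)$ has length at most $(d+2)n^2+n+1$, via the potential $h(u)=\sum_i u_i^2$ together with a pairing of each ``down'' move below the running maximum level with a compensating ``up'' move, so that each pair decreases $h$ by at least $2$ while $h(u^1)\le dn^2$ and the at most $n$ unpaired moves add less than $2n^2$. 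Since a vertex becomes infected at most one step after its $\Pre$-set, the tree height bounds the percolation time. If you want to salvage your approach, you would need a quantitative lemma playing the role of this potential-function bound; as written, the slab-filling estimate is a gap, not a proof.
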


Before we proceed to the main part of this work, let us emphasise the importance of the extremal results in bootstrap percolation. The lower bound in~\cite{metastabilityeffects}, where the order of magnitude of the critical probability in $2$-neighbour bootstrap percolation on $[n]^2$ was determined, follows very easily from the fact that $G_{2,2}(n) = n$. In~\cite{bootstraphigh}, Balogh, Bollob{\'a}s, and Morris used the value of $G_{d,2}(n)$ for arbitrary $d$ as a vital tool to determine the critical probability in $2$-neighbour bootstrap percolation on high-dimensional grids. Finally, we remark that Balister, Bollob{\'a}s, Johnson, and Walters~\cite{randomMajority}, and Huang and Lee~\cite{deterministicHigh}, independently observed that $G_{d,d}(n) \leq c_d n^{d-1}$, where $c_d > 0$ is some constant depending on $d$ only, as infecting the boundary of $[n]^{d}$ (of size at most $2dn^{d-1}$) gives us a percolating set in the $d$-neighbour bootstrap process.

\section{Proof of the main result}
\label{sec:main}

In this section we prove Theorem~\ref{thm:pete}. The result $G_{d,d}(n)\geq n^{d-1}$ follows from our discussion of the monotonicity of the perimeter of the infected set. Therefore we need to prove that $G_{d,d}(n)\leq n^{d-1}$. Unlike for $d=1,2$, in the general case proving the upper bound turns out to be much more challenging.

\begin{proof}[Proof of Theorem~\ref{thm:pete}]
Let $G=[n]^d$ be the $d$-dimensional grid of size $n$. For $d\leq k\leq dn$, we define $V_k=\{v=(v_1,...,v_d)\in [n]^d:\sum_{i=1}^{d} v_i=k\}$. Note that $\bigcup_{k=d}^{dn} V_k=V([n]^d)$.

We will show that the set $A = \bigcup_{i=1}^{d} V_{in}$ percolates in $d$-neighbour bootstrap percolation on $[n]^d$. (We can immediately see that $|A| = n^{d-1}$ as for fixed values of $v_2,...,v_d$, there is exactly one choice of $v_1$ such that $v=(v_1,...,v_d) \in A$.) To do this, we will prove that, for all $1\leq s\leq d$, 
\begin{equation}
\label{eqn:F_s}
F_s=\bigcup_{i=1}^{n-1}V_{(s-1)n+i}\subseteq \langle V_{(s-1)n}\cup V_{sn}\rangle_d.
\end{equation}
Note that we have $V_0, \ldots, V_{d-1} =\emptyset$, and consequently the sets $V_n, \ldots , V_{\left(\lceil d/n \rceil -1 \right)n}$ are empty. First we deal with the ``bottom corner'' of the grid.

\begin{claim}
\label{claim:bottomCorner}
We have $\bigcup_{j=d}^{\lceil d/n \rceil n-1}V_j \subseteq \langle V_{\lceil d/n \rceil n} \rangle_d$.
\end{claim}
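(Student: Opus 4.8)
The plan is a short downward induction on the layers $V_k$, once I check that the whole region in question sits below the ``corner effects'' of the cube. Write $m = \lceil d/n \rceil$, so that the layer in the claim is $V_{mn}$; since $d \le mn \le dn$ this is a genuine non-empty layer, and if $mn = d$ the left-hand union is empty and nothing needs proving, so assume $mn > d$.

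The one arithmetic input I would record first is that $mn \le d + n - 1$: indeed $m - 1 = \lceil d/n \rceil - 1 < d/n$, so $n(m-1) < d$, and as $n(m-1)$ is an integer, $n(m-1) \le d - 1$. Hence for every $k$ with $d \le k \le mn - 1$ we have $k \le d + n - 2$, and I claim every $v = (v_1, \dots, v_d) \in V_k$ has all coordinates at most $n - 1$: if some $v_i = n$, the remaining $d - 1$ coordinates are each $\ge 1$, forcing $k = \sum_j v_j \ge n + (d-1) = d + n - 1$, a contradiction. In particular, for such a $v$ and every $i \in [d]$, the vertex $v + e_i$ (where $e_i$ is the $i$-th standard basis vector) lies in $[n]^d$ and in $V_{k+1}$, so $v$ has exactly $d$ neighbours in $V_{k+1}$.

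I would then finish by downward induction on $k$ from $mn - 1$ to $d$, proving $V_k \subseteq \langle V_{mn} \rangle_d$. At level $k$, the layer $V_{k+1}$ is already fully infected --- it is $V_{mn}$ when $k = mn - 1$, and the inductive hypothesis otherwise --- and by the previous step every $v \in V_k$ has all $d$ of its up-neighbours $v + e_1, \dots, v + e_d$ in $V_{k+1}$, hence $d$ infected neighbours, hence becomes infected. Since distinct vertices of a single layer are non-adjacent and the layer $V_{k-1}$ below is never used, there is no circularity. Taking the union over $d \le k \le mn - 1$ gives the claim.

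I do not anticipate a real obstacle here: the only delicate point is the bound $mn \le d + n - 1$ and the consequent fact that no coordinate of a vertex in $V_d \cup \dots \cup V_{mn-1}$ can equal $n$ --- this is precisely what gives every vertex of the ``bottom corner'' full upward degree and lets the infection cascade down one layer at a time. The genuinely difficult part of the argument is instead the inclusion~\eqref{eqn:F_s} for $s \ge 2$, where one must fill the space strictly between two consecutive hyperplanes $V_{(s-1)n}$ and $V_{sn}$ and boundary effects on every side are unavoidable.
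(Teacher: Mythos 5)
Your proof is correct and follows essentially the same route as the paper: a downward, layer-by-layer induction from $V_{\lceil d/n\rceil n}$ to $V_d$, driven by the observation that no vertex in these layers has a coordinate equal to $n$, so all $d$ upward neighbours lie in the (already infected) layer above. The only cosmetic difference is that you justify the coordinate bound by the explicit inequality $\lceil d/n\rceil n \le d+n-1$, whereas the paper derives it from the emptiness of $V_{(\lceil d/n\rceil -1)n}$; both amount to the same fact.
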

\begin{proof}
Any vertex $v \in V_{\lceil d/n \rceil n - 1}$ has $\sum_{i=1}^{d} v_i=\lceil d/n \rceil n - 1$. If there was some $1 \leq i \leq d$ such that $v_i = n$, then the vertex $v - (n-1)e_i$ would lie in $V_{( \lceil d/n \rceil -1 )n}$ which we know is empty, a contradiction. Hence, $v_i < n$ for all $1 \leq i \leq d$.

Therefore, again for all $1 \leq i \leq d$, we have $v+e_i \in V_{\lceil d/n \rceil n}$ infected. Therefore $v$ has at least $d$ infected neighbours and itself becomes infected. Since $v \in V_{\lceil d/n \rceil n-1}$ was arbitrary, all of $V_{\lceil d/n \rceil n-1}$ becomes infected. We proceed in this manner, in consecutive rounds infecting all vertices in $V_{\lceil d/n \rceil n-2}, V_{\lceil d/n \rceil n-3}, \ldots, V_d$. This completes the proof of the claim.
\end{proof}

Observe that $V_{dn} = \{(n,\ldots,n)\} \subset A$, hence we do not need to deal with the ``upper corner''. Therefore from now on we shall analyse the dynamics of the process ``sandwiched'' between two initially infected hyperplanes. Fix $1+\left \lceil \frac{d}{n}\right \rceil \leq s\leq d$ and assume that $V_{(s-1)n}\cup V_{sn}$ is infected. Given $v\in F_s$, let $t_v=\sum_{i=1}^{d}v_i-(s-1)n$. Next, for $v\in F_s$, we define
\begin{equation}
\label{eqn:Pre(v)}
\Pre(v)=\{v+e_j:v_j\leq t_v\}\cup \{v-e_j:v_j>t_v\}.
\end{equation}
For all $v\in F_s$, we have $|\Pre(v)|=d$. Therefore, if all vertices in $\Pre(v)$ are infected, then $v$ also becomes infected.

We define the \textit{infection witness tree} of $v$, $\IW(v)$, to be a directed labelled $d$-ary tree, with all edges directed away from the root and with vertices labelled with the elements of $F_s\cup V_{(s-1)n}\cup V_{sn}$, where these labels can be repeated in the tree. We construct $\IW(v)$ as follows. We start by declaring the root of the tree active and labelling it with $v$. Then, in consecutive rounds, we select an arbitrary active vertex. If the label $u$ of the vertex belongs to $V_{(s-1)n}\cup V_{sn}$, then this vertex becomes a leaf of $\IW(v)$ and we simply change its status to inactive. Otherwise, if the label $u$ of the vertex  belongs to $F_s$, then we attach $d$ active children to this vertex and label them with the elements of $\Pre(u)$. Then, we again declare the selected vertex inactive. See Figure \ref{fig:IW} for an example of a tree constructed in our algorithm.

\begin{figure}[htb]
\centering
\begin{tikzpicture}[nodes={draw}, ->]
\footnotesize
\node{422/3}
    child { node {322/2} 
        child { node {222/1}
            child { node [circle, draw]{221/0}
            edge from parent node[draw=none,left] {$3$}
            }
            child [missing] 
            child [missing]
        }
        child { node {332/3}
            child { node {432/4} 
                child { node [circle, draw]{532/5} }
                child { node [circle, draw]{442/5} }
                child { node [circle, draw]{433/5} }
            edge from parent node[draw=none,left] {$2$}
            }
            child [missing]
            child { node {333/4} 
                child [missing]
                child { node [circle, draw]{433/5} 
            edge from parent node[draw=none,left] {$3$}}
            }
            edge from parent node[draw=none,left] {$2$}
        }
    }
    child [missing]
    child [missing]
    child { node {432/4}
        child { node [circle, draw]{532/5} }
        child { node [circle, draw]{442/5} }
        child { node [circle, draw]{433/5} }
        edge from parent node[draw=none,left, below] {$2$}
    };
\end{tikzpicture}
\normalsize
\caption{The infection witness tree of $v=(4,2,2)$, $\IW((4,2,2))$, when $n=5$ and $d=3$. Here, the format of the vertex labels is $u / t_u$. The round vertices have labels in the initially infected set $A$. Observe that, as vertices are included in $A$ based only on the sum of their coordinates and not on the order of their values, any two vertices whose coordinates are a permutation of each other become infected simultaneously. Hence, for clarity, rather than drawing multiple children, we use edge labels to denote the number of children whose coordinates are a permutation of a given label (e.g., both $(4,3,2)$ and $(4,2,3)$ belong to $\Pre((4,2,2)))$.}
\label{fig:IW}
\end{figure}
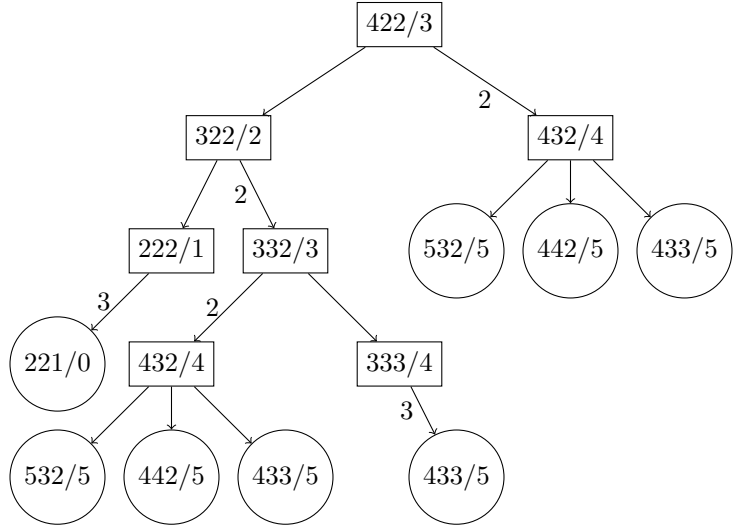

By definition, all leaves of the tree $\IW(v)$ are initially infected. Since $\IW(v)$ is a $d$-ary tree, if $\IW(v)$ is finite then $v$ becomes infected. Since every non-leaf of $\IW(v)$ belongs to $F_s$ which is a finite set, an infinite directed path in $\IW(v)$ would contain infinitely many instances of the same label. Hence, the finiteness of $\IW(v)$ follows immediately from the next lemma.

\begin{lem}
\label{lem:noCycles}
For any $v \in F_s$, $\IW(v)$ has no directed path $u^1,...,u^m,u^{m+1}=u^1$, where $u^{i+1}\in \Pre(u^i)$ for all $1\leq i\leq m$.
\end{lem}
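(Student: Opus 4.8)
The plan is to argue by contradiction: suppose $\IW(v)$ contains a directed path $u^1,\dots,u^m,u^{m+1}=u^1$ with $u^{i+1}\in\Pre(u^i)$ for all $i$ (note that each $u^i\in F_s$, so that $t_{u^i}$ and $\Pre(u^i)$ are defined). First I would record the local structure of a step. Since $\Pre(u)$ contains exactly one point differing from $u$ in each coordinate, and that point is $u\pm e_j$, every step $i$ is of exactly one of two types: a \emph{$+$-step}, where $u^{i+1}=u^i+e_{j(i)}$, which by the definition of $\Pre$ forces the active coordinate to satisfy $u^i_{j(i)}\le t_{u^i}$; or a \emph{$-$-step}, where $u^{i+1}=u^i-e_{j(i)}$, which forces $u^i_{j(i)}>t_{u^i}$. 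Let $P$ and $Q$ be the index sets of $+$-steps and $-$-steps respectively. The coordinate sum $\sigma(u):=\sum_k u_k$ increases by $1$ along a $+$-step and decreases by $1$ along a $-$-step, and it returns to its starting value around the loop, so $|P|=|Q|$; since $m\ge 2$ (the case $m=1$ is impossible because $u\notin\Pre(u)$), both $P$ and $Q$ are nonempty.

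The heart of the argument is to telescope two quadratic quantities around the loop. For $\Phi(u):=\sum_k u_k^2$, a $+$-step changes only coordinate $j(i)$ (by $+1$) and so contributes $2u^i_{j(i)}+1$ to $\Phi(u^{i+1})-\Phi(u^i)$, while a $-$-step contributes $1-2u^i_{j(i)}$; since $\sum_i\bigl(\Phi(u^{i+1})-\Phi(u^i)\bigr)=0$ this yields $\sum_{i\in P}u^i_{j(i)}-\sum_{i\in Q}u^i_{j(i)}=-m/2$. For $\Psi(u):=\sigma(u)^2$, a $+$-step contributes $2\sigma(u^i)+1$ and a $-$-step contributes $1-2\sigma(u^i)$, so the same telescoping gives $\sum_{i\in P}\sigma(u^i)-\sum_{i\in Q}\sigma(u^i)=-m/2$; as $t_{u^i}=\sigma(u^i)-(s-1)n$ and $|P|=|Q|$, the constant shift cancels and hence $\sum_{i\in P}t_{u^i}-\sum_{i\in Q}t_{u^i}=-m/2$ as well. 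Subtracting the two resulting identities gives
\[
\sum_{i\in P}\bigl(u^i_{j(i)}-t_{u^i}\bigr)=\sum_{i\in Q}\bigl(u^i_{j(i)}-t_{u^i}\bigr).
\]

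The contradiction then comes directly from the inequalities built into $\Pre$: on the left each summand satisfies $u^i_{j(i)}-t_{u^i}\le 0$ (definition of a $+$-step), so the left-hand side is $\le 0$, whereas on the right each summand satisfies $u^i_{j(i)}-t_{u^i}\ge 1$ (definition of a $-$-step) and $Q\ne\emptyset$, so the right-hand side is $\ge 1$. This is impossible, which proves the lemma. I expect the only genuine obstacle to be spotting the right invariants: a single linear quantity such as $\sigma$ merely reproduces the trivial balance $|P|=|Q|$, and one has to notice that the two natural quadratic quantities — the sum of squares of the coordinates, and the square of the sum of the coordinates — have loop-sums governed respectively by the active coordinate $u^i_{j(i)}$ and by $t_{u^i}$, so that comparing them pits the two defining inequalities of $\Pre$ against each other. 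The remaining points (the degenerate cases $m\ge 2$ and $P,Q\ne\emptyset$, and the observation that $\Pre(u^i)$ and $t_{u^i}$ are defined because each $u^i\in F_s$) are routine.
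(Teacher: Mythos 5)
Your proof is correct. The bookkeeping checks out: around the closed path the number of $+$-steps equals the number of $-$-steps (coordinate sum returns to its value, and $m\ge 2$ since $u\notin\Pre(u)$), telescoping $\sum_k u_k^2$ gives $\sum_{i\in P}u^i_{j(i)}-\sum_{i\in Q}u^i_{j(i)}=-m/2$, telescoping $\bigl(\sum_k u_k\bigr)^2$ gives $\sum_{i\in P}t_{u^i}-\sum_{i\in Q}t_{u^i}=-m/2$ (the shift $(s-1)n$ cancels because $|P|=|Q|$), and subtracting forces $\sum_{i\in P}(u^i_{j(i)}-t_{u^i})=\sum_{i\in Q}(u^i_{j(i)}-t_{u^i})$, which is impossible since the left side is $\le 0$ and the right side is $\ge |Q|\ge 1$ by the defining inequalities of $\Pre$. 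This is, however, a genuinely different route from the paper's. The paper normalises the cycle so that $u^1$ attains the maximal level $C=t_{u^1}$ and then tracks the monovariant $L_C(u)=\sum_{j:u_j\ge C+1}u_j$, showing it never increases along the cycle and strictly decreases at the first step; your argument needs no choice of basepoint or monotone quantity, but instead derives an exact global identity from two quadratic potentials and lets the two inequalities built into $\Pre$ collide. Your approach is arguably cleaner for the qualitative statement, and it is amusing that your first potential $\sum_k u_k^2$ is exactly the function $h$ the paper later uses in Lemma~\ref{lem:quadraticHeight}; on the other hand, the paper's pairing-of-levels analysis there applies to arbitrary (open) directed paths and yields the quantitative height bound, whereas your telescoping identity as stated exploits the closedness of the loop and would need the same kind of extra work to give length bounds.
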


\begin{proof}
Suppose for a contradiction that the lemma does not hold, so there exists a directed path $u^1,...,u^m,u^{m+1}=u^1$, where $u^{i+1}\in \Pre(u^i)$ for all $1\leq i\leq m$. Then, since the algorithm that we use to construct $\IW(v)$ is deterministic, we know that we have an infinite directed path with labels $(u^{i})_{i\geq 1}$, where $u^{i+1}\in \Pre(u^i)$ for all $i\geq 1$, and there is some $m \geq 1$ (in fact we could only have $m \geq 2$ even) such that $u^{i+m}=u^i$ for all $i\geq 1$. Hence, we can assume without loss of generality that $C=t_{u^1}=\max\limits_{1\leq i\leq m}t_{u^i}$.

As we traverse the directed path $(u^{i})_{i\geq 1}$, whenever $u^{i+1}\in \Pre(u^i)$ with $u^{i+1}=u^i+e_j$ for some $1\leq j\leq d$, by~\eqref{eqn:Pre(v)} we know that $u_j^i\leq t_{u^i}=t_{u^{i+1}}-1\leq t_{u^1}-1=C-1$. So we deduce that $u_j^{i+1}\leq C$. Now, given a vertex $v\in [n]^d$, we define
\[
 L_C(v) =\sum_{\substack{1\leq j\leq d:\\v_j\geq C+1}}v_j.
\]
(I.e., $L_C(v)$ is the sum of all coordinates of $v$ that are larger than $C$.) Therefore, when $t_{u^{i+1}}>t_{u^i}$, we know that
\[
 L_C(u^{i+1})=\sum_{\substack{1\leq j\leq d:\\u_j^{i+1}\geq C+1}}u_j^{i+1}=\sum_{\substack{1\leq j\leq d:\\u_j^i\geq C+1}}u_j^i=L_C(u^i).
\]

However, if $u^{i+1}\in \Pre(u^i)$ with $u^{i+1}=u^i-e_j$ for some $1\leq j\leq d$, then it is clear that $L_C(u^{i+1})\leq L_C(u^i)$. Additionally, by the maximality of $t_{u^1}$, we have that $t_{u^2}=t_{u^1}-1$. So we deduce that $u^2=u^1-e_j$ for some $1\leq j\leq d$. Then, by~\eqref{eqn:Pre(v)}, we have $u_j^1\geq t_{u^1}+1=C+1$. Hence, as $u_j^2=u_j^1-1$, we clearly have $L_C(u^2)<L_C(u^1)$. Thus, following from the fact that $L_C$ never increases as we go along our directed path, $L_C(u^{m+1})<L_C(u^1)$. This implies that $u^{m+1}\neq u^1$, a contradiction to our previous assumption.

Hence, $\IW(v)$ has no directed paths on which the same label is repeated more than once and, as discussed earlier, the whole tree is finite.
\end{proof}

The following corollary is immediate, and concludes the proof of Theorem~\ref{thm:pete}.
\begin{cor}
For any vertex $v\in F_s$, $\IW(v)$ is finite. Consequently, $v$ becomes infected in finite time and, since $v\in F_s$ was chosen arbitrarily, all of $F_s$ becomes infected.
\end{cor}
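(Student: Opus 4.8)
The plan is to deduce the corollary from Lemma~\ref{lem:noCycles} in two short steps: first bound the depth of $\IW(v)$ so as to conclude that it is finite, and then run a bottom-up induction over this finite tree to certify that its root label becomes infected.

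For the first step, fix $v \in F_s$. Lemma~\ref{lem:noCycles} guarantees that no label recurs along any directed path of $\IW(v)$. Since every label of $\IW(v)$ lies in the finite set $F_s \cup V_{(s-1)n} \cup V_{sn} \subseteq [n]^d$, every directed root-to-leaf path therefore has length at most $n^d$. As $\IW(v)$ is $d$-ary (each non-leaf has exactly $d$ children) and has depth at most $n^d$, it has at most $d^{\,n^d}$ vertices, and in particular it is finite. (One could instead invoke K\"onig's lemma, but an explicit depth bound is cleaner and is in any case the kind of estimate one wants for the running-time bounds in Section~\ref{sec:time}.)

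For the second step, I would prove by induction on the height of a subtree of $\IW(v)$ that the label of its root belongs to $\langle V_{(s-1)n} \cup V_{sn} \rangle_d$. The base case is a leaf, whose label lies in $V_{(s-1)n} \cup V_{sn}$ by construction and is thus infected from the start. For the inductive step, a non-leaf with label $u \in F_s$ has exactly $d$ children, carrying the $d$ elements of $\Pre(u)$; each of these is a neighbour of $u$ in $[n]^d$ and, by the inductive hypothesis applied to the strictly shorter child subtrees, lies in $\langle V_{(s-1)n} \cup V_{sn} \rangle_d$. Hence $u$ has $d$ infected neighbours, so $u \in \langle V_{(s-1)n} \cup V_{sn} \rangle_d$. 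Applying this at the root gives $v \in \langle V_{(s-1)n} \cup V_{sn} \rangle_d$, and since $v \in F_s$ was arbitrary, this is precisely the containment~\eqref{eqn:F_s} for the fixed $s$.

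Combining~\eqref{eqn:F_s} over all $1 + \lceil d/n\rceil \leq s \leq d$ with Claim~\ref{claim:bottomCorner}, together with the observation $V_{dn} = \{(n,\ldots,n)\} \subseteq A$ and the identity $\bigcup_{k=d}^{dn} V_k = [n]^d$, then yields $\langle A\rangle_d = [n]^d$, finishing Theorem~\ref{thm:pete}. I do not expect a genuine obstacle at this stage: the delicate work is Lemma~\ref{lem:noCycles}, which is already in hand, and the only point that deserves care is that finiteness of $\IW(v)$ does not by itself show that $v$ is infected — one has to make the bottom-up induction explicit, which is exactly what the second step does.
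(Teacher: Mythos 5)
Your proposal is correct and follows essentially the same route as the paper: the paper also deduces finiteness of $\IW(v)$ from Lemma~\ref{lem:noCycles} together with the finiteness of the label set (you make this explicit via a depth bound of $n^d$ rather than an implicit K\"onig-type argument), and then infers infection of the root from the fact that all leaves are initially infected, which is exactly your bottom-up induction spelled out. The final assembly over all $s$ with Claim~\ref{claim:bottomCorner} and $V_{dn}\subseteq A$ matches the surrounding structure of the paper's proof of Theorem~\ref{thm:pete}.
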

\end{proof}

\section{Percolation time}
\label{sec:time}

In this section, we exploit the machinery developed in Section~\ref{sec:main} to prove Theorem~\ref{thm:smallestTime}. In particular, by tightening our analysis of the height of $\IW(v)$, we will show that the bootstrap percolation process started from the set $A$, defined in~\eqref{eqn:initialSet}, terminates after at most $(d+2)n^2+n$ time steps.

\begin{proof}[Proof of Theorem~\ref{thm:smallestTime}]
The case $d=1$ is trivial; to minimise the percolation time we simply place one infected vertex at $\lceil n/2 \rceil$.

The case $d=2$ is an interesting puzzle. As for the upper bound on $m_2(n)$, we can clearly see that a diagonal percolates $[n]^2$ in $n-1$ steps. For the lower bound, we observe that, by the perimeter argument, at least one of the following two neighbouring vertices: $( \lceil n/2 \rceil, \lceil n/2 \rceil )$ and $( \lfloor n/2 \rfloor, \lceil (n+1)/2 \rceil )$, must be initially healthy. (For $n$ even these two vertices are neighbours in the central $2 \times 2$ subsquare, while for $n$ odd the former one is in the very centre of the grid, with the latter one being its neighbour on the left.) Now, we keep applying the perimeter argument: every time a vertex becomes infected, it must be by virtue of exactly $2$ infected neighbours. Moreover, it is an immediate observation that the perimeter of the infected set would also decrease if two neighbouring vertices became infected at the same time step. Hence, only the corner vertices can become infected without having any of their neighbours still healthy after their infection. This means that, for any percolating set of size $n$, we can construct a path of neighbouring vertices, starting at either $( \lceil n/2 \rceil, \lceil n/2 \rceil )$ or $( \lfloor n/2 \rfloor, \lceil (n+1)/2 \rceil )$ and finishing in one of the corners of the grid, such that the consecutive vertices of the path become infected at strictly later time steps. All such paths have length at least $n-1$: for $n$ even we could take a path from $( \lceil n/2 \rceil, \lceil n/2 \rceil )$ to $(1,1)$, while for $n$ odd from $( \lfloor n/2 \rfloor, \lceil (n+1)/2 \rceil )$ to $(1,1)$. This gives us the desired lower bound on $m_2(n)$. (We remark that Benevides and Przykucki~\cite{maxtimeMinsize} showed that the maximum percolation time for a set of size $n$ in $[n]^2$ is equal to the integer nearest to $(5n^2 - 2n)/8$).

Hence, let us assume that $d \geq 3$. Here, the lower bound follows by an identical argument to the one we used for $d=2$. Consider the vertices
\[
(\lceil n/2 \rceil, \lceil n/2 \rceil, \ldots, \lceil n/2 \rceil) \mbox{ and } (\lceil n/2 \rceil+1, \lceil n/2 \rceil,  \ldots, \lceil n/2 \rceil),
\]
and observe that at least one of them has to be initialy healthy by the perimeter argument. Then, every path from one of these vertices to a corner of the grid has length $dn/2+O(1)$, meaning that $m_d(n) \geq dn/2+O(1)$ as claimed. The upper bound on $m_d(n)$ in Theorem~\ref{thm:smallestTime} follows immediately from the next lemma, which sharpens the analysis in Lemma~\ref{lem:noCycles}.

\begin{lem}
\label{lem:quadraticHeight}
Let $v \in F_s$ and let $u^1 = v,u_2,\ldots,u^m$ be a directed path in $\IW(v)$, with $u^{i}\in \Pre(u^{i-1})$ for all $2\leq i\leq m$. Then $m \leq (d+2)n^2+n+1$.
\end{lem}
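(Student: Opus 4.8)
My plan is to bound $m$ by controlling how often the path can cross each ``horizontal'' boundary inside the slab. It is convenient to index the vertices of the slab between $V_{(s-1)n}$ and $V_{sn}$ by their \emph{level} $a=\sum_{i=1}^{d}u_i-(s-1)n\in\{0,1,\dots,n\}$, so that $t_u$ is exactly the level of $u$, the levels $0$ and $n$ consist of leaves of $\IW(v)$, and every step of the path changes the level by precisely $\pm1$. For $0\le a\le n-1$ let $x_a$ (respectively $y_a$) be the number of steps of the path that go from level $a$ to level $a+1$ (respectively from level $a+1$ to level $a$). No step leaves a leaf-level, so $x_0=y_{n-1}=0$ and $y_0,x_{n-1}\le1$; and since the level is a $\pm1$-walk we have $|x_a-y_a|\le1$, hence $y_a\le x_a+1$, for every interior $a$. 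Therefore
\[
 m-1=\sum_{a=0}^{n-1}(x_a+y_a)\ \le\ 2+\sum_{a=1}^{n-2}(2x_a+1),
\]
and everything reduces to bounding $x_a$ for $1\le a\le n-2$.

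The key estimate will be $x_a\le d(n-a-1)+1$, obtained from a monovariant on the level-$a$ vertices. For $u$ at level $a$ put
\[
 \Phi_a(u)=\sum_{j=1}^{d}\max(u_j-a,\,1),\qquad\text{so that}\qquad d\le\Phi_a(u)\le d(n-a).
\]
Consider the visits of the path to level $a$; between two consecutive visits the path stays, being a $\pm1$-walk, entirely on one side, i.e.\ it performs either an \emph{up-excursion} (all intermediate vertices at levels $\ge a+1$) or a \emph{down-excursion} (all at levels $\le a-1$). The claim is that $\Phi_a$ drops by at least $1$ across every up-excursion and never increases across a down-excursion. Granting this: each up-crossing of the boundary $\{a\}\leftrightarrow\{a+1\}$ is the opening step of an up-excursion, so the values of $\Phi_a$ at the successive up-crossing sources form a strictly decreasing sequence of integers in $[\,d,\,d(n-a)\,]$, and there are at most $d(n-a)-d+1=d(n-a-1)+1$ of them, as claimed. (This is the quantitative refinement of Lemma~\ref{lem:noCycles}: there a closed path forces $\Phi_a$ both to strictly decrease and to return to its value, a contradiction.)

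Verifying the behaviour of $\Phi_a$ is a step-by-step case analysis, using that at a vertex of level $\ell$ an up-step increments a coordinate that is $\le\ell$ while a down-step decrements one that is $\ge\ell+1$. For a down-excursion one checks that each of its steps, all of which originate at a level $\le a$, changes $\Phi_a$ by at most $0$. For an up-excursion with $E$ interior up-steps (hence also $E$ interior down-steps, as the interior runs from level $a+1$ back to level $a+1$): the opening step, of level $a\to a+1$ incrementing a coordinate $\le a$, leaves $\Phi_a$ unchanged; each interior up-step changes $\Phi_a$ by $+1$ if it increments a coordinate that is already $\ge a+1$ and by $0$ otherwise; and each interior down-step, together with the closing step of level $a+1\to a$ decrementing a coordinate $\ge a+2$, changes $\Phi_a$ by $-1$; summing, $\Delta\Phi_a\le E-E-1=-1$. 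I expect this bookkeeping to be the main obstacle — not because any single computation is hard, but because, exactly as in Lemma~\ref{lem:noCycles}, there is no slack, and the signs must be tracked with care. Once it is in place, summing $x_a\le d(n-a-1)+1$ over $1\le a\le n-2$ and using $\sum_{a=1}^{n-2}(n-a-1)=\binom{n-1}{2}$ gives
\[
 m-1\ \le\ 2+\sum_{a=1}^{n-2}\bigl(2d(n-a-1)+3\bigr)\ =\ dn^2-3dn+3n+2d-4,
\]
which is comfortably below $(d+2)n^2+n$, so that $m\le(d+2)n^2+n+1$, as required.
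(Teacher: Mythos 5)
Your proof is correct, and it takes a genuinely different route from the paper. The paper works with the global potential $h(u)=\sum_i u_i^2$: it pairs up all but at most $n$ of the steps into (down-step, up-step) pairs at matched levels, uses the convexity of $x^2$ together with the definition of $\Pre$ to show each pair decreases $h$ by at least $2$, and concludes from $h(u^1)\leq dn^2$ plus a $2n^2$ allowance for the unpaired moves. You instead decompose the path by its crossings of each level boundary $\{a\}\leftrightarrow\{a+1\}$ and bound the number of up-crossings via the per-level monovariant $\Phi_a(u)=\sum_j\max(u_j-a,1)$, which is in effect a quantitative, bounded version of the quantity $L_C$ used in Lemma~\ref{lem:noCycles}: your case analysis (opening step $0$, interior up-steps $\leq+1$, interior down-steps and closing step $-1$, down-excursions nonincreasing) is exactly right given the definition of $\Pre$, and the strictly decreasing values of $\Phi_a$ at up-crossing sources give $x_a\leq d(n-a-1)+1$; the incomplete final excursion causes no trouble since only values at the sources are compared. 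Summing over levels you get roughly $dn^2+O(dn+n)$, which is in fact slightly sharper than the paper's $(d+2)n^2+n$, so your conclusion comfortably implies the stated bound. Two cosmetic remarks: you should state that $\Phi_a$ is defined by the same formula at every vertex (not just at level $a$), since you track its change along steps at other levels; and note that at most one of $y_0, x_{n-1}$ can equal $1$ because the path stops upon reaching a leaf, though your weaker ``$+2$'' is of course sufficient.
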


\begin{proof}
Given $u \in F_s$, let $h(u) = \sum_{i=1}^d u_i^2$ be the sum of squares of the coordinates of $u$. The idea of the proof is to show that long paths in $\IW(v)$, corresponding to large values of $m$, result in very small values of $h$; we want to show that $m > (d+2)n^2+n+1$ would give $h(u^m) < 0$, which is a clear contradiction.

For notational convenience, we shall denote $t_i = t_{u^i}$. Clearly $|t_m-t_1 | \leq n$, since $u^1 \in F_s$, and $ u^m \in F_s \cup V_{(s-1)n} \cup V_{sn}$. Thus, we can find a subset $I \subset \{2,3,\ldots,m\}$ with $|I| \geq m-1-n$ and $|I|$ even, such that we can group the elements of $I$ into pairs
\[
 (i^1, j^1), (i^2, j^2), \ldots, (i^{|I|/2}, j^{|I|/2}),
\]
with the following property: for all $1 \leq k \leq |I|/2$, we have
\begin{enumerate}
 \item $t_{i^k} =  t_{j^k} - 1$,
 \item $t_{i^k} =  t_{i^k-1} - 1$ and $t_{j^k} =  t_{j^k-1} + 1$.
\end{enumerate}
In other words, all but at most $n$ elements of the subpath $u^2,\ldots,u^m$ can be partitioned into pairs $(u^i, u^j)$ such that $u^i$ lies one level below $u^{i-1}$, as well as one level below $u^j$, which in turn lies one level above $u^{j-1}$ (where the level of a vertex $u$ is equal to $t_u$, see Figure~\ref{fig:longPath}).

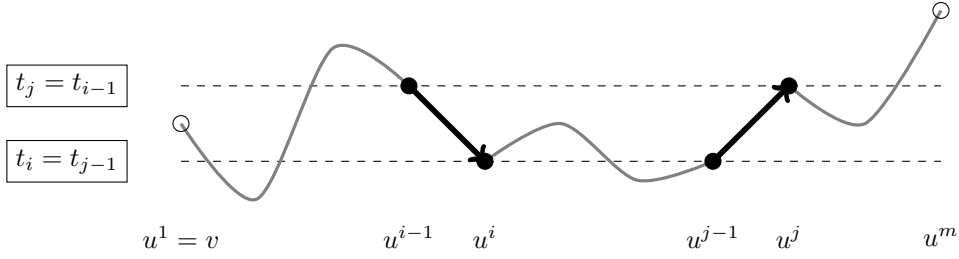
\begin{figure}[htb]
\centering
\begin{tikzpicture}
\footnotesize
 \draw[dashed] (0,1) -- (10,1);
 \draw[dashed] (0,0) -- (10,0);

\draw [black, very thick, gray] plot [smooth] coordinates {(0,0.5) (1,-0.5) (2,1.5) (3,1) };

\draw [black, very thick, gray] plot [smooth] coordinates {(4,0) (5,0.5) (6,-0.25) (7,0) };

\draw [black, very thick, gray] plot [smooth] coordinates {(8,1) (9,0.5) (10,2) };

 \draw[line width = 0.75mm,->] (3,1) -- (4,0);
 \draw[line width = 0.75mm,->] (7,0) -- (8,1);
 
\node[draw] at (-1.5,0) {$t_{i} = t_{j-1}$};
\node[draw] at (-1.5,1) {$t_{j} = t_{i-1}$};

\filldraw (3,1) circle (3pt);
\filldraw (4,0) circle (3pt);
\filldraw (7,0) circle (3pt);
\filldraw (8,1) circle (3pt);
\draw (0,0.5) circle (3pt);
\draw (10,2) circle (3pt);
 
\node at (0,-1) {$u^{1} = v$};
\node at (3,-1) {$u^{i-1}$};
\node at (4,-1) {$u^{i}$};
\node at (7,-1) {$u^{j-1}$};
\node at (8,-1) {$u^{j}$};
\node at (10,-1) {$u^{m}$};

\end{tikzpicture}
\normalsize
\caption{Schematic depiction of the level of consecutive vertices on a directed path $u^1 = v, u_2, \ldots,u^m$, with $u^i$ and $u^j$ paired through $(i,j) \in I$.}
\label{fig:longPath}
\end{figure}

By a reasoning analogous to the one in the proof of Lemma~\ref{lem:noCycles} and by the convexity of $x^2$, we have
\[
h(u^{i-1}) - h(u^i) \geq (t_{i-1}+1)^2 - t_{i-1}^2 = 2 t_{i-1} + 1.
\]
On the other hand, we have
\[
h(u^{j}) - h(u^{j-1}) \leq (t_{j-1}+1)^2 - (t_{j-1})^2 = 2 t_{j-1} + 1.
\]
However, by the properties of our pairs, we have
\[
2 t_{i-1} + 1 = 2 (t_{i}+1) + 1 = 2 t_{j} + 1 = 2 (t_{j-1}+1) + 1 = 2 t_{j-1} + 3.
\]
Hence, the sum of the changes in the value of $h$ as we move from $u^{i-1}$ to $u^i$, and from $u^{j-1}$ to $u^j$ (in an arbitrary order), is at most $-2$. We clearly have $h(u^1) \leq dn^2$, and through the at most $n$ unpaired moves we increase the value of $h$ by at most $n(n^2-(n-1)^2) < 2n^2$. Therefore we must have $|I|/2 \leq (d+2)n^2/2$, which gives $m \leq (d+2)n^2+n+1$. This concludes the proof of the lemma.
\end{proof}

Theorem~\ref{thm:smallestTime} now follows immediately, as the label $u$ of any vertex of $\IW(v)$ becomes infected at most one step after all the vertices in $\Pre(u)$ are infected. The height of $\IW(v)$, being bounded by $ (d+2)n^2+n$, implies the desired bound on the percolation time.
\end{proof}

In fact, numerical simulations suggest that, for $d \geq 3$, the percolation time of the process started from $A$ (as given in~\eqref{eqn:initialSet}) grows quadratically in $n$. For $d=3$ the process terminates after $n^2/2-n+O(1)$ time steps, for $d=4$ it lasts $2n^2/3-2n/3+O(1)$ steps, and for $d=5$ infection takes $n^2-3n+O(1)$ steps. We do not believe that these exact sets $A$ minimise percolation time of a set of size $n^{d-1}$ in $[n]^d$; for example, taking initially infected sets $A' = A'_d = \bigcup_{i=1}^{d} V_{in - \lfloor n/2 \rfloor}$ appears to lead to strictly smaller coefficients of $n^2$. However, motivated by Theorem~\ref{thm:smallestTime} and the results of our simulations, we expect the answer to the following question to be positive.

\begin{qu}
Is $m_d(n) = \Theta(n^2)$ for all $d \geq 3$?
\end{qu}

One could also ask about $m(\bT_n^d)$, the size of the smallest percolating sets in $d$-neighbour bootstrap percolation on $\bT_n^d$, the $d$-dimensional torus of size $n$. It is known that $m(\bT_n^2) = n-1$, but the situation quickly becomes more complicated in higher dimensions. Our result immediately implies that $m(\bT_n^d) \leq n^{d-1}$, but this bound is not sharp. For example, for $d=3$ we could infect an $[n-1]^3$ cube using $(n-1)^2$ initially infected vertices, and then use the boundary conditions of the torus to infect
\[
([n] \times [n-1] \times [n-1]) \cup ([n-1] \times [n] \times [n-1]) \cup ([n-1] \times [n-1] \times [n])
\]
with only three additional initially infected vertices. It is easy to see that this set percolates the torus, giving us
\[
m(\bT_n^3) \leq (n-1)^{2}+3 = n^2-2n+4 < n^2
\]
for all $n \geq 3$.

\begin{qu}
What is the value of $m(\bT_n^d)$ for $d \geq 3$?
\end{qu}

\vspace{0.5em}
\noindent \textbf{Acknowledgement} We would like to thank Gabor Pete for helpful guidance concerning earlier results on the smallest percolating sets, and Ellen Harrison for help in preparing this manuscript.

\bibliographystyle{plain}

\end{document}